\numberwithin{equation}{section}
\newtheorem{question}[equation]{Question}
\newtheorem{thm}[equation]{Theorem}
\newtheorem{prop}[equation]{Proposition}
\theoremstyle{definition}
\newtheorem{dfn}[equation]{Definition}
\newtheorem{exam}[equation]{Example}
\theoremstyle{remark}
\newtheorem{rem}[equation]{Remark}
\renewcommand{\leq}{\leqslant}
\renewcommand{\geq}{\geqslant}
\newcommand{\abs}[1]{\left\lvert#1\right\rvert}
\newcommand{\norm}[1]{\left\|#1\right\|}
\newcommand{\MA}{\mathop{\mathrm{MA}}\nolimits}
\newcommand{\Proj}{\mathop{\mathrm{Proj}}\nolimits}
\newcommand{\PSH}{\mathop{\mathrm{PSH}}\nolimits}
\newcommand{\vol}{\mathop{\mathrm{vol}}\nolimits}
\begin{document}

\title{On the volume of graded linear series and Monge--Amp\`{e}re mass}
\author{Tomoyuki Hisamoto}
\address{Graduate School of Mathematical Sciences, The University of Tokyo, 
3-8-1 Komaba Meguro-ku, Tokyo 153-0041, Japan}
\email{hisamoto@ms.u-tokyo.ac.jp}
\subjclass[2000]{Primary~32J25, Secondary~32W20, 32A25, 14C20}
\keywords{graded linear series, volume, Bergman kernel, Monge--Amp\`{e}re operator}
\date{}
\maketitle

\begin{abstract}

We give an analytic description of the volume of a graded linear series, as the Monge--Amp\`{e}re mass of a certain equilibrium metric associated to any smooth Hermitian metric on the line bundle. We also show the continuity of this equilibrium metric on some Zariski open subset, under a geometric assumption. 

\end{abstract}

\section{Introduction} 
Let $L$ be a holomorphic line bundle on a smooth projective variety $X$. A {\em graded linear series} of $L$ is a graded subalgebra $W$ of the section ring $R=\bigoplus_{k\geq0} H^0(X, L^{\otimes k})$. In this paper we study the following invariant which plays a fundamental role in the asymptotic analysis of graded linear series. 
\begin{dfn}\label{volume}
We denote the dimension of $X$ by $n$. The {\em volume} of a graded linear series $W$ is the nonnegative real number defined by 
\begin{equation*}
\vol(W):=\limsup_{k \to \infty} \frac{\dim W_k}{k^n/n!}. 
\end{equation*}
 
\end{dfn}
This is known to be finite. The limit of supremum is in fact limit for sufficiently divisible $k$ from the result of \cite{KK09}. In case $W$ is complete, {\em i.e.} $W=R$, $\vol(R)$ is nothing but the volume of the line bundle $\vol(L)$ which is a birational invariant of $L$ and widely studied. We refer to Chapter $2$ of \cite{Laz04} for the basic facts. Analytic studies of the volume of line bundles were initiated by \cite{Bou02} and \cite{Ber09}. For a general graded linear series, \cite{KK09} and \cite{LM09} originally systematically studied properties of the volume by relating it with the {\em Okounkov body} of $W$. Proper subalgebras of $R$ naturally arise in many interesting situations of algebraic geometry (see {\em e.g.}  Example \ref{restricted} or \ref{ideal} below) and it is necessary to develop asymptotic analysis of graded linear series. 

In this paper we study the volume for a general graded linear series from the analytic point of view, succeeding to the spirit of previous work of Boucksom, Berman and many other authors. We work over the complex number field $\mathbb{C}$. The main result is an integral representation of the volume via the Monge--Amp\`{e}re product of a certain singular Hermitian metric on $L$, called the {\em equilibrium metric}, which is determined by $W$ with any fixed smooth Hermitian metric on $L$. 
In what follows we identify a singular Hermitian metric $h$ and its local weight function $\varphi$. They are related by the identity $h=e^{-\varphi}$ which holds in each local trivialization patch of $L$. We also identify a holomorphic section $\sigma \in H^0(X, L^{\otimes k})$ with the corresponding function on each local trivialization patch and denote by $\abs{\sigma}$ the absolute value taken for the function. For detail, see Section $2$. 
\begin{dfn}\label{ELW}
Let $h=e^{-\varphi}$ be a smooth Hermitian metric on $L$. For each $k \geq 1$, we define a singular Hermitian metric $h_k=e^{-\varphi_k}$ by  
\begin{align*}
\varphi_k := \sup \bigg\{ \frac{1}{k} \log\abs{\sigma}^2  \ \bigg| \  \sigma \in W_k,  \ \sup_X \abs{\sigma}^2e^{-k\varphi} \leq 1. \bigg\}. 
\end{align*}
The {\em equilibrium metric} of $h$ with respect to $W$ is the (possibly) singular Hermitian metric defined by its weight: 
\begin{align*}
P_W\varphi := (\sup_k \varphi_k)^*. 
\end{align*}
Here we denote the upper-semicontinuous envelope of a function $f$ by $f^*(x):=\limsup_{y \to x} f(y)$. 
\end{dfn}
The formulation of equilibrium metric here is originated from Siciak (\cite{Sic62}) and \cite{Ber09} introduced the corresponding idea for line bundles to investigate the asymptotic of related Bergman kernels. 
Roughly speaking, $P_{W}\varphi$'s epigraph is the $W$-polynomially convex hull of $\varphi$'s epigraph. $P_{W}\varphi$ is a plurisubharmonic function on each trivialization patch provided $W$ is non-trivial. That is, the curvature current $dd^c P_W\varphi$ defines a closed positive current on $X$. On the bounded locus of $P_W\varphi$ one can define the Monge--Amp\`{e}re product $(dd^cP_W\varphi)^n$ in the manner of Bedford--Taylor. Further, the trivial extension of $(dd^cP_W\varphi)^n$ defines a positive measure $\MA(P_W\varphi)$ which has no mass on any pluripolar subset of $X$. This kind of measures is called {\em non-pluripolar Monge--Amp\`{e}re product} and studied by \cite{BEGZ10}. In particular, it was proved that $\MA(P_W\varphi)$ has finite mass over $X$. The following is our main theorem. 
\begin{thm}\label{main}
Let $X$ be a smooth projective variety and $L$ a holomorphic line bundle on $X$. Let $W$ be a graded linear series of $L$ such that the associated map $X \dashrightarrow \mathbb{P}W_k^*$ is birational onto its image for any sufficiently divisible $k$. Then, for any smooth Hermitian metric $h=e^{-\varphi}$ on $L$, the Monge--Amp\`{e}re mass of the equilibrium metric $P_W\varphi$ gives the volume of $W$. That is, 
\begin{equation*}
\vol(W)=\int_X \MA(P_W\varphi)
\end{equation*}
holds. 
\end{thm}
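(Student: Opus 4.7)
The plan is to adapt Berman's Bergman kernel approach for complete linear series \cite{Ber09} to the graded subalgebra setting, with the birationality hypothesis on the Kodaira maps $\Phi_k:X\dashrightarrow\mathbb{P}W_k^*$ playing the role of the usual bigness assumption. First, observe the monotonicity $\varphi_k\leq\varphi_{km}$: for any $\sigma\in W_k$ with $\sup_X\abs{\sigma}^2 e^{-k\varphi}\leq 1$, its $m$-th power $\sigma^m\in W_{km}$ satisfies the analogous $L^\infty$ bound at level $km$, hence $\varphi_k$ is increasing along divisible $k$ and converges pointwise (off a pluripolar set) to $P_W\varphi$.

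Next I replace $\varphi_k$ by the regularized Bergman weight
\begin{equation*}
\psi_k = \frac{1}{k}\log\sum_{i=1}^{N_k}\abs{s_i^{(k)}}^2,
\end{equation*}
where $\{s_i^{(k)}\}$ is an $L^2$-orthonormal basis of $W_k$ with respect to a fixed smooth volume form and $h^k$. Standard Bergman kernel estimates yield $\abs{\varphi_k-\psi_k}=O(\log k / k)$ uniformly on $X$, so $\psi_k$ also increases (modulo $o(1)$) to $P_W\varphi$. By construction $\psi_k = \frac{1}{k}\Phi_k^*\log\|\cdot\|^2$. Let $\pi_k:\tilde X_k\to X$ resolve the indeterminacies of $\Phi_k$ so that $\tilde\Phi_k:=\Phi_k\circ\pi_k:\tilde X_k\to Y_k\subset\mathbb{P}W_k^*$ becomes a morphism; the hypothesis is $\deg\tilde\Phi_k=1$ for $k$ sufficiently divisible. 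Then
\begin{equation*}
\int_X\MA(\psi_k) = \frac{1}{k^n}\int_{\tilde X_k}(\tilde\Phi_k^*\omega_{FS})^n = \frac{\deg Y_k}{k^n}.
\end{equation*}
The degree $\deg Y_k$ equals the volume of the graded subalgebra of $\bigoplus_m H^0(X,L^{km})$ generated by $W_k$, and the Fujita-type approximation for graded linear series from \cite{KK09} yields $\deg Y_k/k^n\to\vol(W)$ as $k\to\infty$ along divisible $k$.

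Finally, combining the two preceding paragraphs with the continuity of non-pluripolar Monge--Amp\`ere products along increasing sequences of quasi-plurisubharmonic weights from \cite{BEGZ10} gives weak convergence $\MA(\psi_k)\to\MA(P_W\varphi)$ on the bounded locus of $P_W\varphi$ together with convergence of the total masses, yielding $\int_X\MA(P_W\varphi)=\vol(W)$.

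This last step is the main obstacle: $P_W\varphi$ can take the value $-\infty$ along the augmented base locus of $W$, so classical Bedford--Taylor continuity does not apply globally, and one must use the non-pluripolar formalism of \cite{BEGZ10} to ensure that no Monge--Amp\`ere mass escapes to the pluripolar singular set in the limit. The a priori finiteness $\lim_k\int_X\MA(\psi_k)=\vol(W)$ established on the algebraic side provides precisely the bound needed to rule this out.
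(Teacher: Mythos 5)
Your proposal is correct and follows essentially the same route as the paper: exploit the monotonicity of $\varphi_k$ along divisible $k$, pass to the $L^2$-Bergman weight (which has the same algebraic singularity type as $\varphi_k$, so the same non-pluripolar Monge--Amp\`ere mass), compute $\int_X\MA(\psi_k)$ as the normalized self-intersection $k^{-n}M_k^n=\deg Y_k/k^n$ on a resolution of the base ideal, identify the limit of these numbers with $\vol(W)$ via the Kaveh--Khovanskii/Jow Fujita-type approximation for graded linear series, and finally pass to the limit in the masses by combining BEGZ mass monotonicity (for the upper bound) with Bedford--Taylor continuity on the complement of the stable base locus (for the lower bound), which is exactly the paper's Proposition \ref{analytic part}. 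The only presentational difference is that your last step cites a generic ``continuity along increasing sequences'' from \cite{BEGZ10}; no such blanket statement exists there, and the correct formulation is precisely the two-sided argument you sketch in your closing paragraph (monotonicity gives $\leq$, Bedford--Taylor off a fixed unbounded locus gives $\geq$), which the paper isolates as a separate proposition.
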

This formula enables us to investigate the positivity of $W$ locally in $X$ and it might be helpful for lower bound estimate of $\vol(W)$ in the future. Theorem \ref{main} is a natural generalization of the results in \cite{Bou02}, \cite{Ber09}, \cite{BB10}, \cite{BEGZ10}, and \cite{His12}. Note that the birationality assumption is necessary for general $W\neq R$ (see Remark \ref{birational2}). Moreover, thanks to the currently available technology, the proof in the present paper get much simpler than the previous ones, even in the case of complete graded linear series. On the other hand, our approach is rather algebraic relying on the result of \cite{KK09}, \cite{LM09}, \cite{Jow10}, and \cite{DBP12} and does not give much information about Bergman kernel asymptotics as \cite{Ber09} or \cite{His12}. In particular, the following regularity problem is open. 
\begin{question}\label{regularity}
In the situation of Theorem \ref{main}, $P_W\varphi$ has Lipschitz continuous derivatives on some non-empty Zariski open subset of $X$?
\end{question}
In the complete case $W=R$ this is proved by \cite{Ber09}. It also holds for the {\em restricted} linear series, by \cite{His12}. See also \cite{BD09}. Such a regularity is the key to analyze the asymptotic of Bergman kernels. For a general graded linear series, we may show the following at the present. 
\begin{thm}\label{continuity}
In the situation of Theorem \ref{main} let us further 
assume that $W$ is finitely generated and $\Proj W$, which gives the image of the associated rational map $X \dashrightarrow \mathbb{P}W_k^*$ for $k$ sufficiently divisible, is normal. Then $P_W\varphi$ is continuous on some non-empty Zariski open subset of $X$. 
\end{thm}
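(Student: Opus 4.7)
My plan is to descend the question to a complete linear series on $Y := \Proj W$, pass to suitable resolutions, and then invoke the continuity of equilibrium metrics for big line bundles from \cite{Ber09}. Since $W$ is finitely generated, after passing to a Veronese subalgebra---which leaves $P_W\varphi$ unchanged by the submultiplicativity $\sigma \in W_k \Rightarrow \sigma^m \in W_{mk}$---I may assume $W$ is generated in degree one and $W_k = H^0(Y, L_Y^{\otimes k})$ for every $k \geq 1$, where $L_Y := \mathcal{O}_Y(1)$ is ample on the normal projective variety $Y$.

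Next I take a log-resolution $\mu: \tilde{X} \to X$ of the indeterminacy of the rational map $\pi: X \dashrightarrow Y$, a resolution of singularities $\nu: \tilde{Y} \to Y$, and arrange that the induced map $\tilde{\pi}: \tilde{X} \to \tilde{Y}$ is a birational morphism. On $\tilde{X}$ one has $\mu^* L \cong \tilde{\pi}^* \nu^* L_Y \otimes \mathcal{O}_{\tilde{X}}(E)$ for an effective exceptional divisor $E$, with canonical defining section $s_E$. Normality of $Y$, which gives $\nu_* \mathcal{O}_{\tilde{Y}} = \mathcal{O}_Y$, together with the projection formula, identifies $\mu^* W_k$ with $s_E^k \cdot \tilde{\pi}^* \nu^* H^0(Y, L_Y^{\otimes k})$ inside $H^0(\tilde{X}, \mu^* L^{\otimes k})$. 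Plugging this identification into Definition \ref{ELW} should yield the pointwise identity
\[
\mu^* P_W \varphi = \tilde{\pi}^* P_{\tilde{Y}}(\psi) + \log\abs{s_E}^2
\]
on $\tilde{X}$, where $\psi$ is a smooth Hermitian metric on $\nu^* L_Y$ induced by $\mu^*\varphi$ and a fixed smooth metric on $\mathcal{O}(E)$, and $P_{\tilde{Y}}(\psi)$ is the equilibrium envelope for the complete linear series of $\nu^* L_Y$ on $\tilde{Y}$.

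Since $\nu^* L_Y$ is big and nef on the smooth projective variety $\tilde{Y}$, the continuity of $P_{\tilde{Y}}(\psi)$ on the ample locus---a non-empty Zariski open subset of $\tilde{Y}$---follows from \cite{Ber09} together with the extension to the big-and-nef case in \cite{BB10}. Pulling back through $\tilde{\pi}$ and descending via $\mu$, which is an isomorphism over a Zariski open of $X$, then yields the desired continuity of $P_W \varphi$ on a non-empty Zariski open subset of $X$.

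The main obstacle will be establishing the displayed identity. Concretely, one must verify that multiplication by $s_E^k$ carries the sup-normalization $\sup_{\tilde{X}} \abs{\sigma}^2 e^{-k\mu^*\varphi} \leq 1$ from the definition of $P_{\mu^*W}(\mu^*\varphi)$ onto the corresponding $L^\infty$ constraint for sections of $(\nu^* L_Y)^{\otimes k}$ on $\tilde{Y}$, and that the upper-semicontinuous regularizations on the two sides match pointwise; both amount to a careful bookkeeping of the contribution of the exceptional divisor $E$, using the birationality and surjectivity of $\tilde{\pi}$.
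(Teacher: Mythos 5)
Your reduction to the image $Y = \Proj W$ and a resolution $\tilde Y$, followed by an appeal to the regularity of equilibrium envelopes for complete linear series on $\tilde Y$, is in the same spirit as the paper's proof (Proposition \ref{key}), which also works on a resolution of $Y$ using the normality hypothesis to identify $W_{k\ell}\simeq H^0(Y,\mathcal{O}_Y(k))\simeq H^0(\tilde Y,\pi^*\mathcal{O}_Y(k))$. The Veronese reduction you begin with is also fine. However, the pivotal step --- the displayed identity $\mu^*P_W\varphi=\tilde\pi^*P_{\tilde Y}(\psi)+\log\abs{s_E}^2$ --- has a genuine gap that is more than ``careful bookkeeping.''

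The problem is that there is no smooth metric $\psi$ on $\nu^*L_Y$ over $\tilde Y$ for which the two sup-normalizations agree. Pulling the $X$-side constraint back to $\tilde X$ and dividing out $s_E^k$, a section $\tau\in H^0(\tilde Y,\nu^*L_Y^{\otimes k})$ enters the competition defining $\mu^*P_W\varphi$ precisely when $\sup_{\tilde X}\abs{\tilde\pi^*\tau}^2 e^{-k\phi}\leq 1$, where $\phi:=\mu^*\varphi-\varphi_E$ for whatever reference metric $\varphi_E$ on $\mathcal{O}_{\tilde X}(E)$ you fix. For this to coincide with the $\tilde Y$-side constraint $\sup_{\tilde Y}\abs{\tau}^2e^{-k\psi}\leq 1$ one needs $\phi=\tilde\pi^*\psi$, i.e.\ $\phi$ must be constant along the (positive-dimensional) fibers of the contraction $\tilde\pi:\tilde X\to\tilde Y$. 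Since $\varphi$ is an arbitrary smooth Hermitian metric on $L$, this fails in general: $\phi$ simply does not descend to $\tilde Y$, so $\psi$ as you describe it is not well defined, and the two constraint sets --- hence the two envelopes --- genuinely differ. The discrepancy is supported on the exceptional locus of $\tilde\pi$, which is exactly where a sup-normalization feels remote behavior, so one cannot wave it away by restricting to a Zariski open set either.

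The paper sidesteps this by not attempting to equate envelopes. Instead it proves directly that the Bergman kernel weights $u_k$ (with $L^2$-normalization, shown earlier to produce the same envelope $P\varphi$) satisfy a two-sided estimate $P\varphi-C_K/k\leq u_k\leq P\varphi+C_K/k$ on compact subsets of a Zariski open set, so that the continuous functions $u_k$ converge uniformly to $P\varphi$ there. The lower bound is obtained by constructing a peak section $\sigma_k\in W_{k\ell}$ on $\tilde Y$ via the Ohsawa--Takegoshi extension theorem and H\"ormander's $\bar\partial$-estimate, with weight $(k-1)\ell\,\tilde f_*P\varphi+\psi$ where $\tilde f_*P\varphi$ is a pushforward envelope on $\tilde Y$ built from the sections (which, by normality of $Y$, do descend) and $\psi$ is an auxiliary strictly psh weight with $\tilde f^*\psi\leq\ell\varphi$; the resulting section is then pulled back to $X$. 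Crucially, $\tilde f_*P\varphi$ is used only as a psh weight making the $L^2$-method run --- it need not be, and is not identified with, $P_{\tilde Y}$ of any metric on $\tilde Y$, which is precisely the identification your argument would require. If you want to pursue your route, you would need to replace the exact identity by comparison inequalities in both directions plus an extension/restriction argument of the $L^2$-type that the paper already carries out, at which point the two proofs essentially merge.
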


The organization of this paper is as follows. In Section $2$ and $3$ we will give some preliminary materials, from the analytic and the algebraic viewpoints. We prove Theorem \ref{main} in Section $4$. Section $5$ will be devoted to the proof of Theorem \ref{continuity}. 

\section{Monge--Amp\`{e}re operator} 

In this section, we briefly review the definition and basic properties of the Monge--Amp\`{e}re operator. 

Let $L$ be a holomorphic line bundle on a projective manifold $X$. We usually fix a family of local trivialization patches $U_{\alpha}$ which cover $X$. A singular Hermitian metric $h$ on $L$ is by definition a family of functions $h_{\alpha}=e^{-\varphi_{\alpha}}$ which are defined on corresponding $U_{\alpha}$ and satisfy the transition rule: $\varphi_{\beta}=\varphi_{\alpha}-\log \abs{g_{\alpha \beta}}^2$ on $U_{\alpha}\cap U_{\beta}$. Here $g_{\alpha \beta}$ are the transition functions of $L$ with respect to the indices $\alpha$ and $\beta$. {\em The weight functions} $\varphi_{\alpha}$ are assumed to be locally integrable. If $\varphi_{\alpha}$ are smooth, $\{e^{-\varphi_{\alpha}}\}_{\alpha}$ defines a smooth Hermitian metric on $L$. We usually denote the family $\{ \varphi_{\alpha} \}_{\alpha}$ by $\varphi$ and omit the indices of local trivializations. Notice that each $\varphi=\varphi_{\alpha}$ is only a local function and not globally defined, but the curvature current $\Theta_h=dd^c \varphi$ is globally defined and is semipositive if and only if each $\varphi$ is plurisubharmonic ({\em psh} for short). Here we denote by $d^c$ the real differential operator $\frac{\partial-\bar{\partial}}{4\pi\sqrt{-1}}$.  We call such a weight a {\em psh weight}. The most important example is those of the form $k^{-1}\log (\abs{\sigma_1}^2+\cdots +\abs{\sigma_N}^2)$, defined by some holomorphic sections $\sigma_1, \cdots, \sigma_N  \in H^0(X, L^{\otimes k})$. Here $\abs{\sigma_i}$ ($1\leq i \leq N$) denotes the absolute value of the corresponding function of each $\sigma_i$ on $U_{\alpha}$. We call such weights {\em algebraic singular}. More generally, a psh weight $\varphi$ is said to have a {\em small unbounded locus} if it is locally bounded outside a closed complete pluripolar subset $S \subset X$. 
A singular Hermitian metric $h=e^{-\varphi}$ is said to have strictly positive curvature if $dd^c \varphi \geq \omega$ holds for some K\"{a}hler form $\omega$. 

Let $n$ be the dimension of $X$. The Monge--Amp\`{e}re operator is defined by   
\begin{equation*}
   \varphi \mapsto  \MA(\varphi) := (dd^c \varphi)^n 
\end{equation*}
when $\varphi$ is smooth. On the other hand it does not make sense for general $\varphi$. 
The celebrated result of Bedford--Taylor \cite{BT76} tells us 
that the right hand side can be defined as a current 
if $\varphi$ is at least in the class $L^{\infty}\cap \PSH(U_{\alpha})$. 
Specifically, by induction on the exponent $q = 1,2,...,n$, it can be defined as: 
\begin{equation*}
   \int_{U_{\alpha}} (dd^c \varphi)^q \wedge \eta :=  
   \int_{U_{\alpha}} \varphi (dd^c \varphi)^{q-1} \wedge dd^c \eta   
\end{equation*}
for each test form $\eta$.  
Here $ \int$ denotes the canonical pairing of currents  and test forms. 
This is indeed well-defined and defines a closed positive current, 
because $ \varphi $ is a bounded Borel function 
and $ (dd^c \varphi)^{q-1} $ has measure coefficients by the induction hypothesis. Notice the fact that any closed positive current has measure coefficients. 
Bedford--Taylor's Monge--Amp\`{e}re products have useful continuity properties:  
\begin{thm}[\cite{Ko05}, Theorem $1.11$, Proposition $1.12$, Theorem $1.15$.]\label{continuity properties of BT}
For any sequence of bounded psh weights, the convergence of Monge--Amp\`{e}re products 
  \begin{equation*}
  ( dd^c \varphi_k )^n \to (dd^c \varphi )^n 
  \end{equation*} 
  holds in the sense of currents if it satisfies one of the following conditions. 
    \begin{itemize}
    \setlength{\itemsep}{0pt}
     \item[$(1)$]
        $\varphi_k$ is non-increasing and converges to $\varphi$ pointwise in $X$.    
     \item[$(2)$]
        $\varphi_k$ is non-decreasing and converges to $\varphi$ almost everywhere in $X$. 
     \item[$(3)$] 
        $\varphi_k$ converges to $\varphi$ uniformly on any compact subset of $X$. 
   \end{itemize}
\end{thm}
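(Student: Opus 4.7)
The plan is to reduce the continuity of $P_W\varphi$ on $X$ to the continuity of the equilibrium metric for the ample line bundle $L_Y := \mathcal{O}_Y(1)$ on $Y = \Proj W$, by going through a resolution of the birational map $\phi : X \dashrightarrow Y$. Using finite generation, I first replace $W$ by a Veronese subalgebra so that $W$ is generated by $W_1$; this only rescales $\varphi$ and does not affect the qualitative statement of continuity on a Zariski open subset. Then $Y$ is a normal projective variety and $L_Y$ is very ample. I take a log resolution $\pi : \tilde X \to X$ of the base ideal of $W_1$ so that $\tilde\phi := \phi \circ \pi : \tilde X \to Y$ is a morphism; then $\pi^* L = \tilde\phi^* L_Y \otimes \mathcal{O}_{\tilde X}(F)$ for an effective divisor $F$ with canonical section $s_F$. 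By Zariski's main theorem combined with the normality of $Y$, pullback yields the identification $\pi^* W_k = \{ \tilde\phi^* \tau \cdot s_F^k : \tau \in H^0(Y, L_Y^k) \}$ for $k$ sufficiently divisible.

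Next, since $\pi$ is a proper birational morphism, the $\sup$ and the upper-semicontinuous regularization in Definition~\ref{ELW} commute with $\pi^*$, so $\pi^* P_W\varphi = P_{\pi^*W}(\pi^*\varphi)$ on $\tilde X$. Fix any smooth Hermitian metric $\psi_0$ on $L_Y$ and any smooth Hermitian metric $h_F = e^{-\varphi_F}$ on $\mathcal{O}_{\tilde X}(F)$, and write $\pi^*\varphi = \tilde\phi^*\psi_0 + \varphi_F + g$ with $g$ a globally defined smooth function on $\tilde X$. Substituting $\sigma = \tilde\phi^*\tau \cdot s_F^k$ in the defining constraint $\sup_X |\sigma|^2 e^{-k\varphi} \leq 1$ and performing the sup fiberwise over $\tilde\phi$ reformulates this constraint as $\sup_Y |\tau|^2 e^{-k\psi_1} \leq 1$, where
\begin{equation*}
\psi_1 := \psi_0 - \log M, \qquad M(y) := \sup_{\tilde x \in \tilde\phi^{-1}(y)} |s_F(\tilde x)|^2_{h_F}\, e^{-g(\tilde x)}.
\end{equation*}
The corresponding computation of $\frac{1}{k}\log|\pi^*\sigma|^2$ then gives the identity
\begin{equation*}
\pi^* P_W\varphi \;=\; \tilde\phi^*(P_Y\psi_1) \,+\, \log|s_F|^2
\end{equation*}
on $\tilde X \setminus F$ (equality of local weights in trivializations adapted to $\pi^* L = \tilde\phi^* L_Y \otimes \mathcal{O}(F)$).

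Finally I would establish continuity of $P_Y\psi_1$ on some non-empty Zariski open $V_0 \subset Y$. Since $\tilde\phi$ is an isomorphism on a Zariski open of $\tilde X$, both $M$ and thus $\psi_1$ are smooth on the corresponding Zariski open of $Y$, while $\psi_1$ is merely upper-semicontinuous elsewhere. Because $L_Y$ is ample on the normal projective variety $Y$, the continuity of $P_Y\psi_1$ on a Zariski open can be obtained by passing to a smooth resolution $\mu : Y' \to Y$, applying the continuity of the equilibrium metric for the big-and-nef line bundle $\mu^* L_Y$ on the smooth $Y'$ on its ample locus (a non-empty Zariski open of $Y'$), as in the results of \cite{Ber09} and \cite{BEGZ10}, and descending to $Y$ using $\mu_*\mathcal{O}_{Y'} = \mathcal{O}_Y$ (which holds since $Y$ is normal). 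Pulling this continuity back by $\tilde\phi$ and restricting to $\tilde X \setminus F$ yields continuity of $\pi^* P_W\varphi$ on a non-empty Zariski open of $\tilde X$; since $\pi$ is an isomorphism outside its exceptional locus, the continuity descends to a non-empty Zariski open of $X$.

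The main obstacle is the transfer step: one must verify carefully that the fiberwise sup producing $\psi_1$ is genuinely compatible with the equilibrium construction on $Y$ after the $\log|s_F|^2$ correction, and that $\psi_1$ has enough regularity on a Zariski open of $Y$ so that the continuity of $P_Y\psi_1$ there can be deduced from the literature on big line bundles. A secondary technical issue is handling the upper-semicontinuous envelope in the presence of the singular term $\log|s_F|^2$; this is addressed by restricting to $\tilde X \setminus F$, where that term is smooth. The normality hypothesis on $\Proj W$ enters essentially twice, namely to identify $\pi^* W_k$ with $H^0(Y,L_Y^k)\cdot s_F^k$ and to descend continuity from the resolution $Y'$ back to $Y$.
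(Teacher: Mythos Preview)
Your proposal does not address the stated theorem at all. The statement in question is the Bedford--Taylor continuity theorem for Monge--Amp\`ere products: given bounded psh weights $\varphi_k$ converging monotonically (pointwise decreasing, a.e.\ increasing) or locally uniformly to a bounded psh weight $\varphi$, one has $(dd^c\varphi_k)^n \to (dd^c\varphi)^n$ weakly. This is a foundational result in local pluripotential theory, quoted in the paper from \cite{Ko05} without proof; an actual proof proceeds via Chern--Levine--Nirenberg mass estimates and repeated integration by parts on the inductive definition of $(dd^c\varphi)^q$, and has nothing whatsoever to do with graded linear series, equilibrium metrics, or the geometry of $\Proj W$.

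What you have written is instead a sketch toward Theorem~\ref{continuity} (continuity of $P_W\varphi$ on a Zariski open set under the finite-generation and normality hypotheses). Even read as such, your approach differs from the paper's: the paper, in Proposition~\ref{key}, proves uniform convergence of the Bergman kernel weights $u_{k\ell}$ to $P_W\varphi$ on compacta of $X\setminus S$ directly, by constructing peak sections via the Ohsawa--Takegoshi extension theorem and H\"ormander's $L^2$-estimates on a resolution $\tilde Y$ of $Y=\Proj W$. Your proposed route---reducing to known continuity of the equilibrium envelope for a big-and-nef line bundle on a resolution of $Y$---is plausible in outline, but the transfer step you yourself flag is a genuine gap: the fiberwise supremum $\psi_1$ is only upper-semicontinuous a priori, whereas the results of \cite{Ber09} and \cite{BEGZ10} on regularity of equilibrium envelopes require the reference weight to be smooth (or at least continuous). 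Without controlling $\psi_1$ better, the reduction does not close.
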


It is also necessary to consider unbounded psh weights. 
On the other hand, for our purpose, 
it is enough to deal with weights with small unbounded loci. 

\begin{dfn}\label{MA}
 Let $\varphi$ be a psh weight of a singular metric on $L$. If $\varphi$ has a small unbounded locus contained in an algebraic subset $S$, we define a positive measure $\MA(\varphi)$ on $X$ by 
 \begin{equation*}
 \MA (\varphi)  := 
 \text{ the zero extension of } ( dd^c \varphi)^n . 
 \end{equation*}
 Note that the coefficient of $( dd^c \varphi )^n $ is well-defined 
 as a measure on $X \setminus S$. 
 $\hfill \Box$
\end{dfn}

Actually $(dd^c \varphi)^n$ has a finite mass so that $\MA(\varphi)$ defines a closed positive current on $X$. For a proof, see \cite{BEGZ10}, Section 1. 

\begin{rem}
In \cite{BEGZ10}, the {\em non-pluripolar} Monge--Amp\`{e}re product was defined 
in fact for general psh weights on a compact K\"{a}hler manifold.
Note that this definition of the Monge--Amp\`{e}re operator makes 
the measure $\MA(\varphi)$ to have no mass on any pluripolar set.  
Roughly speaking, $\MA(\varphi)$ ignores the mass which comes from the singularities of $\varphi$. 
For this reason, as a measure-valued function in $\varphi$, $\MA(\varphi)$ is no longer continuous. This also applies to $\MA(P_W\varphi)$ and one of the technical point for us to prove Theorem \ref{main}. 
$\hfill \Box$
\end{rem}
We recall the fundamental fact established in \cite{BEGZ10} 
which states that 
the less singular psh weight has the larger Monge--Amp\`{e}re mass. 
Recall that given two psh weight $\varphi$ and $\varphi'$ on $L$, 
$\varphi$ is said to be less singular than $\varphi'$ 
if there exists a constant $C > 0$ such that $\varphi' \leq \varphi + C$ holds on $X$. 
We say that a psh weight is {\em minimal singular} 
if it is minimal with respect to this partial order. 
When $\varphi$ is less singular than $\varphi'$ and $\varphi'$ is less singular than $\varphi$, 
we say that the two functions have the equivalent singularities. 
This defines a equivalence relation. 

\begin{thm}[\cite{BEGZ10}, Theorem 1.16.]\label{comparison theorem} 
Let $\varphi$ and $\varphi'$ be psh weights 
with small unbounded loci such that
$\varphi$ is less singular than $\varphi'$. Then 
\begin{equation*}
\int_X \MA(\varphi')
\leq 
\int_X \MA(\varphi)
\end{equation*}
holds. 
\end{thm}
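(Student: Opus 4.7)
After replacing $\varphi$ by $\varphi+C$, I may assume $\varphi'\le\varphi$ everywhere on $X$. Introduce the canonical truncations
\[
\varphi_j:=\max(\varphi,-j),\qquad \varphi'_j:=\max(\varphi',-j),
\]
bounded psh weights satisfying $\varphi'_j\le\varphi_j$ and decreasing pointwise to $\varphi$ and $\varphi'$ respectively. Plurifine locality of the Bedford--Taylor operator gives $(dd^c\varphi_j)^n=(dd^c\varphi_k)^n$ on $\{\varphi>-j\}$ for every $k\ge j$, so
\[
\int_X\MA(\varphi)=\lim_{j\to\infty}\int_{\{\varphi>-j\}}(dd^c\varphi_j)^n
\]
is a monotone increasing limit (and analogously for $\varphi'$). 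Moreover the total Bedford--Taylor mass is a cohomological invariant: for every bounded psh weight $\psi$ on $L$, $\int_X(dd^c\psi)^n=c_1(L)^n=:V$. This follows from Stokes' theorem on the compact manifold $X$ applied to the telescoping expansion of $(dd^c\psi)^n-(dd^c\psi_0)^n$ against any fixed smooth reference weight $\psi_0$, using that $\psi-\psi_0$ is a globally defined bounded function on $X$.

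\textbf{Reduction.} Combining the two facts above, the desired inequality $\int_X\MA(\varphi')\le\int_X\MA(\varphi)$ is equivalent to the pointwise-in-$j$ ``lost mass'' inequality
\[
\int_{\{\varphi\le -j\}}(dd^c\varphi_j)^n \ \le\ \int_{\{\varphi'\le -j\}}(dd^c\varphi'_j)^n
\qquad\text{for each }j,
\]
namely that the less singular weight loses no more Monge--Amp\`ere mass to its polar locus than the more singular one. I would establish this by the plurifine-local form of the Bedford--Taylor comparison principle applied to the bounded psh pair $\varphi'_j\le\varphi_j$: on any small coordinate ball $B\subset X$ the classical Dirichlet comparison principle gives the required inequality, and one patches globally using the plurifine locality of $(dd^c\cdot)^n$.

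\textbf{Main obstacle.} The delicate point is the absence of a genuine boundary on the compact manifold $X$, so that the classical Dirichlet form of the comparison principle cannot be applied globally. One must replace it by its plurifine-local counterpart, handling the contributions from the pluripolar unbounded loci of $\varphi$ and $\varphi'$ by an exhaustion of $X$ minus a closed pluripolar set. Once the pointwise-in-$j$ lost-mass inequality is secured, letting $j\to\infty$ and invoking that $\MA(\varphi),\MA(\varphi')$ place no mass on pluripolar sets completes the proof.
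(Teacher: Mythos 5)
The paper does not prove this statement: it is quoted verbatim as Theorem~1.16 of \cite{BEGZ10}, so there is no ``paper's own proof'' to compare against. Evaluated on its own terms, your sketch contains a genuine and, as you yourself note, unresolved gap at the decisive step, together with a technical problem in the setup.

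The setup problem first. The object $\max(\varphi,-j)$ is not a psh weight on $L$: a constant does not satisfy the cocycle rule $\varphi_\beta=\varphi_\alpha-\log|g_{\alpha\beta}|^2$, so the truncation must really be $\max(\varphi,\varphi_0-j)$ against a fixed smooth weight $\varphi_0$, and this is psh only if $\varphi_0$ can be chosen psh, i.e.\ only if $L$ is nef. For the big but non-nef $L$ that the paper cares about, $\theta+dd^c\max(u,-j)$ (in the $\theta$-psh normalization of \cite{BEGZ10}) is \emph{not} a positive current, so both the assertion that $\varphi_j$ is a ``bounded psh weight'' and the Stokes-theorem claim that every bounded psh weight has total Monge--Amp\`ere mass $c_1(L)^n$ need to be reformulated in the $\theta$-psh framework with non-positive currents, which is precisely the technical care \cite{BEGZ10} take in Section~1 and which your argument skips.

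The main gap is the ``pointwise-in-$j$ lost-mass inequality''
\[
\int_{\{\varphi\le -j\}}(dd^c\varphi_j)^n\ \le\ \int_{\{\varphi'\le -j\}}(dd^c\varphi'_j)^n .
\]
You flag this as the delicate point and sketch no actual derivation. The Bedford--Taylor comparison principle controls $\int_{\{u<v\}}(dd^cv)^n$ by $\int_{\{u<v\}}(dd^cu)^n$; it does not by itself compare Monge--Amp\`ere masses over sublevel sets of two \emph{different} functions, and the set $\{\varphi\le -j\}$ is neither open nor plurifine closed in a way that lets you apply it on a coordinate ball and patch. Modulo the cohomological-mass normalization, this inequality \emph{is} the theorem --- you have reformulated the statement rather than reduced it to something established. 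The phrase ``I would establish this by the plurifine-local form of the comparison principle'' points in the direction of the much later argument of Witt Nystr\"om for the general (not small-unbounded-locus) monotonicity, but that argument is substantially more involved than a local patching of the Dirichlet comparison principle, and you do not carry it out. As it stands, the crucial inequality is asserted, not proven.

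So: the framework (lost mass at level $j$, total-mass bookkeeping) is a reasonable way to \emph{organize} a proof, but the proposal leaves the central inequality unjustified, and the truncations are set up in a way that only makes literal sense when $L$ is nef.
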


\begin{rem}
  It is unknown whether Theorem \ref{comparison theorem} holds for general psh weights. 
  $\hfill \Box$
\end{rem}


\section{The volume of a graded linear series}

First we introduce the suitable class of graded linear series to handle with the  volume. Let $W=\bigoplus_{k\geq 0}W_k$ be a graded linear series. For each $k$, $W_k$ defines a rational map $f_k\colon X \dashrightarrow \mathbb{P}W_k^*$ unless $W_k=\{0\}$. Here $W_k^*$ denotes the dual vector space of $W_k$. Notice that the image of a rational map is defined to be the projection of the graph.  
\begin{dfn}
We call a graded linear series $W$ {\em birational} if the associated map $f_k\colon X \dashrightarrow \mathbb{P}W_k^*$ is birational onto its image for any sufficiently divisible $k$. 
\end{dfn}
This notion is introduced by \cite{LM09} (see Definition $2.5$). 
If the line bundle $L$ is big, the graded linear series $R=\bigoplus_{k\geq0}H^0(X, L^{\otimes k})$ is complete and by definition is birational. Thus the notion of birational graded linear series is a natural generalization of the complete linear series of a big line bundle. A line bundle $L$ is known to be big if and only if its volume is positive, {\em i.e.} $\vol(L)>0$. This is why we concentrate on the class of big line bundles in the study of the volume. It is also true that the volume of a birational graded linear series is positive (see \cite{LM09}, Lemma $2.6$ and Theorem $2.13$), but the converse does not hold. 

\begin{exam}\label{finite}
Let $f\colon X \to Y$ be a finite morphism to a projective variety and fix an embedding $Y \hookrightarrow \mathbb{P}^N$. Then $W_k:=f^*H^0(Y, \mathcal{O}_Y(k))$ defines a graded linear series with positive volume. But this is not birational unless $\deg f =1$.  
\end{exam}

The difference seems subtle at first glance, however in fact the class of graded linear series with positive volume is much harder to treat than the class of birational graded linear series. See Remark \ref{birational1}. 
To emphasize the significance of the generalization to non-complete linear series, let us describe some examples of birational graded linear series. 
\begin{exam}\label{restricted}
Let $Y$ be a smooth projective variety and $L$ a holomorphic line bundle defined over $Y$. We assume that $X$ is a closed subvariety of $Y$. The family of subspaces defined by 
\begin{equation*}
W_k:=\mathrm{Im} \big[ \ H^0(Y, L^{\otimes k}) \to H^0(X, {L^{\otimes k}|_X}) \ \big]
\end{equation*}
is called the {\em restricted linear series}. 
The volume $\vol_{Y|X}(L):=\vol(W)$ is called the {\em restricted volume}. If $X$ is not contained in the augmented base locus $\mathbb{B}_+(L)$ of $Y$ (for the definition, see \cite{Laz04}, Definition 10.3.2), $W$ defines a birational graded linear series. 
For restricted linear series, the author investigated the asymptotic of related Bergman kernels in \cite{His12} and obtained Theorem \ref{main} as a corollary. The point is that in this special case we can prove Question \ref{regularity} thanks to an $L^2$-extension theorem for the subvariety $X$. 
\end{exam}

\begin{exam}\label{ideal}
Let $\mathfrak{a}\subseteq \mathcal{O}_X$ be an ideal sheaf. The family of subspaces 
\begin{equation*}
W_k:= H^0(X, L^{\otimes k} \otimes \overline{\mathfrak{a}^k})
\end{equation*}
defines a graded linear series. Here $\overline{\mathfrak{a}^k}$ denotes the integral closure of $\mathfrak{a}^k$. There exist a modification $\mu \colon X' \to X$ and an effective divisor $F$ on $X'$ such that $\mathfrak{a}\cdot\mathcal{O}_{X'} = \mathcal{O}_{X'}(-F)$ and by definition $\overline{\mathfrak{a}}=\mu_*\mathcal{O}_{X'}(-F)$ hold. Hence we have $W_k = \mu_*H^0(X', \mu^*L^{\otimes k} \otimes \mathcal{O}_{X'}(-kF))$. In particular, $W$ is birational if and only if $\mu^*L \otimes \mathcal{O}_{X'}(-F)$ is big. If we assume further $\mu^*L \otimes \mathcal{O}_{X'}(-F)$ is ample, Theorem \ref{main} follows from the results of \cite{Ber07}, Section $4$. It seems not so hard to extend these results to the case when $\mu^*L \otimes \mathcal{O}_{X'}(-F)$ is big, along the same line as \cite{Ber07}. 
\end{exam}

\begin{exam}
Let $X:=\mathbb{P}^2$, $L:=\mathcal{O}(1)$, and $W:=\mathbb{C}[X, Y, YZ, YZ^2, \dots, YZ^i, \dots]$ $(i \geq 1)$ be the graded subalgebra of the homogeneous coordinate ring $\mathbb{C}[X, Y, Z]$. It is then easy to see that $W_k$ is base point free and the natural map $X \to \mathbb{P}W_k^*$ is birational onto its image for each $k$. However, $W$ is not finitely generated $\mathbb{C}$-algebra. 
\end{exam}

Study of the volume of birational graded linear series was first taken by \cite{KK09} and \cite{LM09}. They used the theory of Okounkov bodies to derive the log-concavity of the volume and the following type of Fujita's approximate Zariski decomposition. It motivates our study and will play the central role in the algebraic part of the proof of Theorem \ref{main}. See also \cite{DBP12} and \cite{Jow10}. 

\begin{thm}[\cite{KK09}, Theorem $5$. See also \cite{LM09}, Theorem 3.5 and \cite{DBP12}, Theorem 3.14.]\label{Fujita} 
Let $W$ be a graded linear series with $\vol(W)>0$. Then for any $\varepsilon>0$ there exists a number $\ell_0$ such that 
\begin{equation*}
\lim_{k\to \infty}\frac{\dim \mathrm{Im}\big[S^kW_{\ell}\to W_{k\ell}\big]}{k^n\ell^n/n!} \geq \vol(W)-\varepsilon 
\end{equation*} 
holds for any $\ell\geq\ell_0$. 
\end{thm}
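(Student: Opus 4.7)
The plan is to prove the theorem via the Okounkov body construction of Kaveh--Khovanskii and Lazarsfeld--Musta\c{t}\u{a}. Fix an admissible flag on $X$ inducing a rank-$n$ valuation $\nu\colon R\setminus\{0\}\to\mathbb{Z}^n$, and form the graded semigroup
\[
\Gamma(W):=\bigsqcup_{k\geq 0}\{k\}\times\nu(W_k\setminus\{0\})\ \subseteq\ \mathbb{N}\times\mathbb{Z}^n.
\]
Let $\Delta=\Delta(W)\subseteq\mathbb{R}^n$ be the height-$1$ slice of the closed convex cone generated by $\Gamma(W)$, and let $\Lambda\subseteq\mathbb{Z}^n$ be the subgroup generated by the differences $\nu(W_k)-\nu(W_k)$ for $k$ sufficiently divisible. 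The assumption $\vol(W)>0$ forces $\dim\Delta=n$, and the Kaveh--Khovanskii asymptotic for finitely generated semigroups identifies $\vol(W)=\mathrm{vol}_n(\Delta)/[\mathbb{Z}^n:\Lambda]$.

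For each $\ell\geq 1$, let $\Gamma_\ell\subseteq\Gamma(W)$ be the finitely generated subsemigroup generated by the degree-$\ell$ slice, and set $\Delta_\ell:=\mathrm{Conv}(\ell^{-1}\nu(W_\ell\setminus\{0\}))\subseteq\Delta$ and $\Lambda_\ell:=\langle\nu(W_\ell)-\nu(W_\ell)\rangle\subseteq\Lambda$. Because $\nu$ is a valuation, choosing any basis of $\mathrm{Im}[S^k W_\ell\to W_{k\ell}]$ adapted to $\nu$ shows
\[
\dim\mathrm{Im}\bigl[S^kW_\ell\to W_{k\ell}\bigr]\ =\ \#\bigl(\Gamma_\ell\cap(\{k\ell\}\times\mathbb{Z}^n)\bigr),
\]
and Khovanskii's theorem on lattice points in finitely generated semigroups yields
\[
\lim_{k\to\infty}\frac{\#\bigl(\Gamma_\ell\cap(\{k\ell\}\times\mathbb{Z}^n)\bigr)}{k^n\ell^n/n!}\ =\ \frac{\mathrm{vol}_n(\Delta_\ell)}{[\mathbb{Z}^n:\Lambda_\ell]}.
\]

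It then remains to arrange, by choosing $\ell_0$ divisible enough, that the right-hand side exceeds $\vol(W)-\varepsilon$ for every sufficiently divisible $\ell\geq\ell_0$. Since $\Lambda$ is generated by finitely many elements, all living in some fixed bounded range of degrees, one has $\Lambda_\ell=\Lambda$ as soon as $\ell$ is sufficiently divisible; and since $\bigcup_\ell\Delta_\ell$ is dense in $\Delta$, a finite collection of rational points in $\Delta$ whose convex hull approximates $\Delta$ in volume to within $\varepsilon[\mathbb{Z}^n:\Lambda]$ can be accommodated in a single $\Delta_{\ell_0}$, after which the monotonicity $\Delta_\ell\supseteq\Delta_{\ell_0}$ along multiples of $\ell_0$ propagates the bound. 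The principal obstacle, I expect, is organizing these two simultaneous approximations (matching the lattice indices $\Lambda_\ell=\Lambda$ and approximating $\Delta$ by $\Delta_\ell$ in volume) for the same $\ell_0$, and verifying the valuative identity for $\dim\mathrm{Im}[S^kW_\ell\to W_{k\ell}]$; both are purely combinatorial/convex-geometric once the Okounkov semigroup framework is in place.
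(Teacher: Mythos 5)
The paper does not prove this statement itself---it is cited directly from \cite{KK09} (Theorem 5), with \cite{LM09} and \cite{DBP12} as alternate references---so there is no in-paper proof to compare against. Your proposal reconstructs the argument along the lines those references actually use (Okounkov semigroups and Khovanskii's lattice-point asymptotics), so the approach is the right one.

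One concrete imprecision to flag: the identity
\[
\dim\mathrm{Im}\bigl[S^kW_\ell\to W_{k\ell}\bigr]\ =\ \#\bigl(\Gamma_\ell\cap(\{k\ell\}\times\mathbb{Z}^n)\bigr)
\]
is not an equality in general, only an inequality $\geq$. What the one-dimensional-leaves property gives you is $\dim\mathrm{Im}=\#\,\nu\!\left(\mathrm{Im}\setminus\{0\}\right)$, and multiplicativity of $\nu$ gives the containment $k\cdot\nu(W_\ell)\subseteq\nu(\mathrm{Im}\setminus\{0\})$; but the reverse containment can fail, since a linear combination of products $\sigma_{i_1}\cdots\sigma_{i_k}$ whose leading terms cancel can acquire a valuation that is not a $k$-fold sum of elements of $\nu(W_\ell)$. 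Concretely, for a valuation with $\nu(z^2)=(0,0,2)$, $\nu(yz)=(0,1,1)$, $\nu(y^2-xz)=(0,2,0)$ (lex order with $x$-coordinate dominant) the subspace $W_2=\mathrm{span}(z^2,\,yz,\,y^2-xz)$ has $W_2\cdot W_2\ni (yz)^2-(z^2)(y^2-xz)=xz^3$ with valuation $(1,0,3)$, which lies outside the sumset $\nu(W_2)+\nu(W_2)$. Fortunately this does not harm the argument: the inequality $\dim\mathrm{Im}\geq\#(\Gamma_\ell\cap(\{k\ell\}\times\mathbb{Z}^n))$ goes in the direction you need, so the Khovanskii asymptotic still yields $\liminf_k \dim\mathrm{Im}/(k^n\ell^n/n!)\geq\mathrm{vol}_n(\Delta_\ell)/[\mathbb{Z}^n:\Lambda_\ell]$, which is the lower bound the theorem requires. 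You should simply replace the $=$ by $\geq$ and phrase the subsequent step as a lower bound. The remaining two ingredients you identify---showing $\Lambda_\ell=\Lambda$ for $\ell$ sufficiently divisible, and approximating $\Delta$ by $\Delta_{\ell_0}$ with the monotonicity $\Delta_{\ell_0}\subseteq\Delta_\ell$ for $\ell_0\mid\ell$---are as you say routine, and the way you propose to reconcile the two approximations at a common $\ell_0$ (take a finite generating set for $\Lambda$, take a finite volume-approximating set of rational vertices, and let $\ell_0$ clear all the relevant denominators and degrees) is the standard device and is correct.
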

We will use the following consequence which is equivalent to Theorem $\mathrm{C}$ of \cite{Jow10}, to prove Theorem \ref{main}. 
\begin{prop}\label{algebraic part}
Let $W$ be a birational graded linear series. For each $k$ let $\mu_k\colon X_k \to X$ be a resolution of the base ideal $\mathfrak{b}(W_k)$ such that $\mu_k^{-1}\mathfrak{b}(W_k)=\mathcal{O}(-F_k)$ holds for some effective divisor $F_k$ on $X_k$. Define the line bundle on $X_k$ by $M_k:= \mu_k^*L^{\otimes k} \otimes \mathcal{O}(-F_k)$ and denote the self-intersection number of the globally generated line bundle $M_k$ by $M_k^n$. Then it holds that 
\begin{equation*}
\vol(W)=\lim_{k \to \infty} \frac{M_k^n}{k^n}.  
\end{equation*}
\end{prop}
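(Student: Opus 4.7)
The plan is to sandwich $M_k^n/k^n$ between two bounds both tending to $\vol(W)$ (along sufficiently divisible $k$), using the multiplicative image $\dim \mathrm{Im}[S^m W_k \to W_{km}]$ as the common intermediary; this is essentially the argument behind Jow's Theorem C. The lower bound will come from Theorem \ref{Fujita}, the upper bound from the trivial containment $\mathrm{Im}[S^m W_k \to W_{km}] \subseteq W_{km}$, and both rely on a single asymptotic identity relating the image to $M_k^n$.

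Specifically, I would first establish
\begin{equation*}
\lim_{m \to \infty} \frac{n!\,\dim \mathrm{Im}[S^m W_k \to W_{km}]}{m^n} = M_k^n \qquad (\ast)
\end{equation*}
for every sufficiently divisible $k$. Indeed, $\mu_k^{-1}\mathfrak{b}(W_k) = \mathcal{O}(-F_k)$ makes $\mu_k^*W_k$ base-point-free on $X_k$, so it defines a morphism $\phi_k \colon X_k \to Y_k \subseteq \mathbb{P}(\mu_k^*W_k)^*$ with $M_k = \phi_k^*\mathcal{O}_{Y_k}(1)$. Since $\phi_k = f_k \circ \mu_k$ (as rational maps) is a composition of two birational maps, $\phi_k$ is birational onto $Y_k$, whence $M_k^n = \deg Y_k$. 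Classical Hilbert polynomial asymptotics for $Y_k \subseteq \mathbb{P}(\mu_k^*W_k)^*$ then give
\begin{equation*}
\dim \mathrm{Im}[S^m(\mu_k^*W_k) \to H^0(X_k, M_k^{\otimes m})] = \frac{(\deg Y_k)\,m^n}{n!} + O(m^{n-1}),
\end{equation*}
and the injection $\mu_k^* \colon H^0(X, L^{\otimes km}) \hookrightarrow H^0(X_k, \mu_k^*L^{\otimes km})$ identifies the left-hand side with $\dim \mathrm{Im}[S^m W_k \to W_{km}]$, proving $(\ast)$.

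Given $(\ast)$, both bounds follow quickly. For the lower one, Theorem \ref{Fujita} provides $k_0(\varepsilon)$ such that for all sufficiently divisible $k \geq k_0(\varepsilon)$,
\begin{equation*}
\vol(W) - \varepsilon \leq \lim_m \frac{\dim \mathrm{Im}[S^m W_k \to W_{km}]}{m^n k^n / n!} = \frac{M_k^n}{k^n},
\end{equation*}
so $\liminf_k M_k^n/k^n \geq \vol(W)$. For the upper one, $\mathrm{Im}[S^m W_k \to W_{km}] \subseteq W_{km}$ together with $(\ast)$ yields
\begin{equation*}
\frac{M_k^n}{k^n} = \lim_m \frac{\dim \mathrm{Im}[S^m W_k \to W_{km}]}{m^n k^n / n!} \leq \lim_m \frac{\dim W_{km}}{(km)^n / n!} = \vol(W),
\end{equation*}
the last equality using that $\vol(W)$ is a genuine limit along sufficiently divisible indices \cite{KK09}.

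The main obstacle is the identity $(\ast)$, which genuinely requires the birationality of $\phi_k$: if $\phi_k$ were only generically finite of degree $d > 1$, one would get $M_k^n = d \cdot \deg Y_k$ strictly exceeding the asymptotic growth of the image and breaking the comparison (cf.~Example \ref{finite}). The birationality hypothesis on $W$ is precisely what rules out this pathology.
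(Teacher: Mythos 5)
Your argument is correct and structurally the same as the paper's: both proofs reduce to the finitely generated subalgebra $W^{(k)}$ generated by $W_k$, identify $\vol(W^{(k)})$ with $k^{-n}M_k^n$, and then sandwich $\vol(W)$ using Theorem \ref{Fujita} on one side and the inclusion $W^{(k)}\subseteq W$ on the other. The only real difference is in how the key identity $\vol(W^{(k)})=k^{-n}M_k^n$ is established: you compute it directly from the Hilbert polynomial of the homogeneous coordinate ring of $Y_k\subseteq\mathbb{P}(\mu_k^*W_k)^*$ together with $\deg Y_k=M_k^n$ (from birationality of $\phi_k$), whereas the paper instead sets up the commutative square relating $Z_k=Y_k$ to the fixed model $Y=\Proj W^{(k)}$ via a birational morphism $\pi_k$ with $\pi_k^*\mathcal{O}_Y(k/\ell)=\mathcal{O}_{Z_k}(1)$ and then invokes the birational invariance and $n$-homogeneity of $\vol$. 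The two computations of that lemma are equivalent in content; the paper's diagram chase makes the role of the birational invariance of volume more explicit, while your route via the Hilbert polynomial of $Y_k$ is a touch more direct and avoids introducing the auxiliary model $Y$ at that stage.
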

Here $k$ runs through sufficiently divisible numbers. 
\begin{proof}
Let us first assume that $W$ is finitely generated. In that case there exists some $\ell$ such that for each $k$ the natural map $S^kW_{\ell} \to W_{k\ell}$ is surjective. Then composing with inverse of the Segre embedding to the image, the induced $\mathbb{P}W_{k\ell}^* \to \mathbb{P}S^kW_{\ell}^*$ maps the image of $X \dashrightarrow \mathbb{P}W_{k\ell}^*$ onto the image of $X \dashrightarrow\mathbb{P}W_{\ell}^*$ isomorphically. Let us fix such $\ell$, and denote the image of the natural map $f\colon X \dashrightarrow \mathbb{P}W_{\ell}^*$ by $Y$. Notice that $Y$ is isomorphic to $\Proj W$, which is well-defined since $W$ is finitely generated. We also denote the image of the natural map $g_k \colon X \dashrightarrow \mathbb{P}H^0(X_k, M_k)^*$ by $Z_k$. Then for any $k$ divided by $\ell$ the linear projection $\mathbb{P}H^0(X_k, M_k)^* \dashrightarrow \mathbb{P}W_{k}^*$ composed with the above isomorphism between the images induces the morphism $\pi_k\colon Z_k \to Y$ such that the following diagram commutes. 
\[\xymatrix{
   {X_k}\ar[r]^{g_k}\ar[d]_{\mu_k} 
 & {Z_k}\ar[d]^{\pi_k} \\ 
   {X}\ar @{-->}[r]_{f} 
 & Y 
}
\]
Here $\pi_k \circ g_k$ gives a resolution of $f$ for $X_k$. By the definition of $\pi_k$, we have $\pi_k^*\mathcal{O}_Y(\frac{k}{\ell})=\mathcal{O}_{Z_k}(1)$. Taking $\ell$ sufficiently divisible, we may further assume $f$ is birational so that $\pi_k$ is also birational. Therefore we obtain 
\begin{equation*}
\vol(\mathcal{O}_Y(\frac{k}{\ell})) = \vol(\pi_k^*\mathcal{O}_Y(\frac{k}{\ell})) = \vol(\mathcal{O}_{Z_k}(1)).
\end{equation*}
The left hand side gives $k^n\vol(W)$ and $\vol(\mathcal{O}_{Z_k}(1)) = M_k^n$ so we conclude $\vol(W) = k^{-n}M_k^n$ in this case. 

In general, Theorem \ref{Fujita} reduces the proof to the case where $W$ is finitely generated. Let us explain it. For each $\ell$ set $W^{(\ell)}_{k}:=\mathrm{Im}[ S^{\frac{k}{\ell}}W_{\ell} \to W_{k}]$ if $k$ is devided by $\ell$ and otherwise set $W^{(\ell)}_{k}:=\{0\}$. Then $W^{(\ell)}$ defines a finitely generated graded linear series of $L$. Applying the above argument in the finitely generated case, we obtain $\vol(W^{(\ell)})=\ell^{-n}M_{\ell}^n$. Note that $\mu_{\ell}$ gives a resolution of $\mathfrak{b}(W^{(\ell)}_k)$ for any $k$ so that $M_{\ell}^{\otimes \frac{k}{\ell}} =\mu_{\ell}^*L^{\otimes k}\otimes\mathcal{O}(-\frac{k}{\ell}F_{\ell})$ corresponds to $W^{(\ell)}_k$. Then for any $\varepsilon>0$ sufficiently divisible $\ell$ assures 
\begin{equation*}
\vol(W) \geq \vol(W^{(\ell)})=\ell^{-n}M_{\ell}^n \geq \vol(W) -\varepsilon. 
\end{equation*} 
This ends the proof. 
\end{proof}
The reduction to the finitely generated case is the critical step to prove Theorem \ref{main} and the idea comes down to the proof of Theorem \ref{Fujita}. See also \cite{Ito12} for a similar argument. 

\begin{rem}\label{birational1}
In the above proof, the assumption $W$ is birational is crucial. For general $W$, $\pi_k$ possibly has degree greater than one and Proposiion \ref{algebraic part} does not hold. In fact in Example \ref{finite}, the right hand side in the proposition gives $\deg f$ times of $\vol(W)$. 
\end{rem}

\section{Proof of Theorem \ref{main}}
Let us prove the main theorem of this paper. In the sequel we fix the notation as in the statement of Theorem \ref{main}. Recall that $\varphi_k$ is a singular metric on $L$ defined by 
\begin{align*}
\varphi_k := \sup \bigg\{ \frac{1}{k} \log\abs{\sigma}^2  \ \bigg| \  \sigma \in W_k,  \ \sup_X \abs{\sigma}^2e^{-k\varphi} \leq 1. \bigg\}. 
\end{align*} 
By compactness of the unit ball in $W_k$, this defines a psh weight. We claim that $\varphi_k$ has algebraic singularities described by $\mathfrak{b}(W_k)$, the base ideal of $W_k$. To see this we will compare $\varphi_k$ with the corresponding Bergman kernel weight which is defined as follows. 
Fix a smooth volume form $dV$ on $X$. Set the $L^2$-norm by $\norm{\sigma}^2_{m\varphi}:= \int_X \abs{\sigma}^2e^{-m\varphi} dV$ and introduce the Bergman kernel weight $u_k$ as 
\begin{equation*}
u_k(x) := \sup \bigg\{ \frac{1}{k} \log\abs{\sigma(x)}^2  \ \bigg| \ \sigma \in W_k, \ \text{and}  \ \norm{\sigma}^2_{k\varphi} \leq 1. \bigg\}, 
\end{equation*}
for any $x \in X$. 
It is then easy to see that 
\begin{equation*}
(\limsup_{k \to \infty} u_k)^*= P_W\varphi 
\end{equation*}
holds. In fact, we obviously have 
\begin{equation*}
\norm{\sigma}^2_{k\varphi} \leq \int_X dV \cdot \sup_X \abs{\sigma}^2e^{-k\varphi}
\end{equation*}
and by the mean value inequality 
\begin{equation*}
\abs{\sigma(x)}^2 \leq C_r\sup_{B(x; r)}e^{k\varphi} \norm{\sigma}^2_{k\varphi}
\end{equation*}
holds for any sufficiently small ball $B(x; r)$ in a local coordinate. 

If one fix any orthonormal basis of $W_{k}$ with respect to the above $L^2$-norm, say $\{ \sigma_1, \dots \sigma_N \}$, then it is easy to see that $u_k =k^{-1}\log (\abs{\sigma_1}^2+\cdots+\abs{\sigma_N}^2)$ holds. Thus $\varphi_k$ also has algebraic singularities described by $\mathfrak{b}(W_k)$. 

Now by the definition of $F_k$ there exists a smooth semipositive $(1, 1)$-form $\gamma \in c_1(M_k)$ such that 
\begin{align*}
dd^c\mu_k^* u_{k} & = dd^c  \mu_k^* {k}^{-1}\log (\abs{\sigma_1}^2+\cdots+\abs{\sigma_N}^2) \\
& = {k}^{-1} (\gamma + [F_k])
\end{align*}
hold. Here $[F_k]$ stands for the current defined by the divisor $F_k$. 
Since $M_k$ is a globally generated line bundle we have $M_k^n=\int_X\gamma^n$ and the non-pluripolarity yields  $k^{-n}\gamma^n=\MA(\mu_k^*u_{k})$ on $X$. Therefore by Theorem \ref{comparison theorem} we obtain 

\begin{equation}\label{M_k}
k^{-n}M_k^n=\int_{X_k} \MA(\mu_k^*u_{k})=\int_{X} \MA(\varphi_k). 
\end{equation}

On the other hand, we have $P_W\varphi = (\sup_k \varphi_k )^*$, and the sequence $\varphi_k$ is essentially increasing, in the sense that $\varphi_k \leq \varphi_{\ell}$ if $k$ divides $\ell$. As a consequence, the sequence of $\psi_k:=\varphi_{2^k}$ is non-decreasing, and it converges in $L^1$-topology to $P_W\varphi$. We conclude using:
\begin{prop}\label{analytic part}
Let $\psi_k$ be an non-decreasing sequence of psh weights with small unbounded loci on a big line bundle $L$, and assume that $\psi_k \to \psi$ in the $L^1$ topology, {\em i.e.} $\psi = (\sup_k \psi_k)^*$. Then 
\begin{align*}
\int_X \MA(\psi_k) \to \int_X \MA(\psi). 
\end{align*}
\end{prop}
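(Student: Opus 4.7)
The upper bound $\limsup_k\int_X\MA(\psi_k)\leq \int_X\MA(\psi)$ is immediate from Theorem \ref{comparison theorem}: since $\psi_k\leq \psi$, the weight $\psi$ is less singular than each $\psi_k$, so $\int_X\MA(\psi_k)\leq\int_X\MA(\psi)$ and, by the same comparison between consecutive terms, the sequence is non-decreasing. The substantive part of the argument is the reverse inequality $\liminf_k\int_X\MA(\psi_k)\geq \int_X\MA(\psi)$, and the plan is to reduce to Bedford--Taylor's theory of bounded plurisubharmonic weights by truncating against a reference weight with minimal singularities.

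Since $L$ is big, I fix a psh weight $V$ on $L$ with minimal singularities; it is locally bounded on the ample locus $U\subseteq X$ whose complement is an algebraic (hence pluripolar) proper subset. After translating $V$ by a constant I may assume $V\geq \psi$, hence $V\geq \psi_k$ for every $k$. For $c>0$ set
\begin{equation*}
\psi_k^{(c)}:=\max(\psi_k,V-c),\qquad \psi^{(c)}:=\max(\psi,V-c).
\end{equation*}
The inequalities $V-c\leq \psi_k^{(c)},\psi^{(c)}\leq V$ show that all of these weights share the singularity type of $V$, so Theorem \ref{comparison theorem} gives
\begin{equation*}
\int_X\MA(\psi_k^{(c)})=\int_X\MA(\psi^{(c)})=\int_X\MA(V)=\vol(L)
\end{equation*}
for every $k$ and $c$. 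By the construction of non-pluripolar products in \cite{BEGZ10},
\begin{equation*}
\int_X\MA(\psi_k)=\vol(L)-\lim_{c\to\infty}I(k,c),\qquad \int_X\MA(\psi)=\vol(L)-\lim_{c\to\infty}I(\infty,c),
\end{equation*}
where $I(k,c):=\int_{\{\psi_k\leq V-c\}}(dd^c\psi_k^{(c)})^n$ and $I(\infty,c):=\int_{\{\psi\leq V-c\}}(dd^c\psi^{(c)})^n$, and the inner limits are non-increasing in $c$.

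The crux is to prove $\limsup_{k\to\infty}I(k,c)\leq I(\infty,c)$ for each fixed $c$. Because the truncation coincides with the shifted reference weight on $\{\psi_k<V-c\}$, locally $(dd^c\psi_k^{(c)})^n=(dd^cV)^n$ there, so modulo boundary contributions $I(k,c)$ is the $\MA(V)$-mass of $\{\psi_k\leq V-c\}\cap U$. On $U$, the Bedford--Taylor mass of $(dd^cV)^n$ is non-pluripolar (Theorem \ref{continuity properties of BT} applied to local approximations of $V$). As $\psi_k\nearrow$, the level sets $\{\psi_k\leq V-c\}\cap U$ decrease to $\{\sup_k\psi_k\leq V-c\}\cap U$, which differs from $\{\psi\leq V-c\}\cap U$ only by a pluripolar subset since $\psi=(\sup_k\psi_k)^*$. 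Monotone convergence of $\MA(V)$ on the decreasing level sets then gives $\lim_kI(k,c)=I(\infty,c)$. Writing $a_k:=\lim_cI(k,c)=\vol(L)-\int_X\MA(\psi_k)$, which is non-increasing in $k$, one concludes
\begin{equation*}
\lim_k a_k\leq \inf_c\limsup_k I(k,c)\leq \inf_c I(\infty,c)=\lim_c I(\infty,c)=\vol(L)-\int_X\MA(\psi),
\end{equation*}
which is the desired $\liminf_k\int_X\MA(\psi_k)\geq \int_X\MA(\psi)$.

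The main obstacle is the interplay of the two nested limits $k\to\infty$ and $c\to\infty$: Bedford--Taylor continuity on the ample locus alone is not enough, because the singularities of the $\psi_k$ can carry Monge--Amp\`ere mass which only re-emerges in $\MA(\psi)$ after passing to the limit in $k$. The truncation against $V$ pins these singular contributions onto the fixed non-pluripolar measure $\MA(V)$, and the monotonicity $\psi_k\nearrow\psi$ is precisely what forces the level sets $\{\psi_k\leq V-c\}$ to shrink onto $\{\psi\leq V-c\}$ modulo pluripolar negligibles, which is what makes the reconciliation of the two limits possible.
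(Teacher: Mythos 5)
Your upper bound is the same as the paper's. For the lower bound, however, you take a genuinely different and considerably longer route. The paper's argument is a two-line deduction: a single proper algebraic subset $S$ works for the whole non-decreasing sequence (since $\psi_k \geq \psi_1$, all $\psi_k$ and $\psi$ are locally bounded off the unbounded locus of $\psi_1$), so Theorem~\ref{continuity properties of BT}(2) gives weak convergence $(dd^c\psi_k)^n \to (dd^c\psi)^n$ on the open set $X\setminus S$, and then lower semicontinuity of total mass under weak convergence on an open set (test against $0\leq f\leq 1$ compactly supported in $X\setminus S$ and take the supremum) yields $\int_X\MA(\psi)\leq\liminf_k\int_X\MA(\psi_k)$. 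That is the whole proof. You instead reconstruct the mass via the BEGZ truncation against a reference weight $V$ with minimal singularities, reduce to monotone convergence of the sublevel sets $\{\psi_k\leq V-c\}$ against the fixed non-pluripolar measure $\MA(V)$, and then interchange the two nested limits. This route is essentially a rerun of the internals of the BEGZ comparison machinery, whereas the paper uses that theorem as a black box only for the easy half and settles the substantive half with the elementary Bedford--Taylor increasing-sequence theorem. What your approach buys is that it makes no use at all of where the unbounded loci sit; what it costs is length and a real technical loose end, namely the step ``modulo boundary contributions $I(k,c)$ is the $\MA(V)$-mass of $\{\psi_k\leq V-c\}\cap U$.'' The Bedford--Taylor locality identity $(dd^c\psi_k^{(c)})^n=(dd^cV)^n$ holds only on the open set $\{\psi_k<V-c\}$; on the level set $\{\psi_k=V-c\}$ the measure $(dd^c\psi_k^{(c)})^n$ need not coincide with $(dd^cV)^n$, so the identification of $I(k,c)$ with $\MA(V)(\{\psi_k\leq V-c\}\cap U)$ is not automatic. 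One can patch this (for instance by working at non-exceptional values of $c$, or by exploiting monotonicity of $I(k,\cdot)$ to ignore the at most countably many jump values), but as written the step is asserted rather than proved. You should either supply that argument or, more efficiently, note that once the sublevel sets are decreasing, the direct invocation of Theorem~\ref{continuity properties of BT}(2) on the ample locus makes the whole truncation detour unnecessary.
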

\begin{proof}
We have $\sup_k \int_X \MA(\psi_k) \leq \int_X \MA(\psi)$ by Theorem \ref{comparison theorem}. On the other hand, if $S$ is a proper algebraic subset of $X$ outside which $\psi_k$ is locally bounded, then $(dd^c\psi_k)^n \to (dd^c\psi)^n$ weakly on $X \setminus S$ by Theorem \ref{continuity properties of BT}, hence
\begin{align*}
\int_X \MA(\psi) \leq \liminf_{k \to \infty} \int_X \MA(\psi_k). 
\end{align*}
\end{proof}
Theorem \ref{main} is now concluded by Proposition \ref{algebraic part} together with Proposition \ref{analytic part}. 

\begin{rem}\label{birational2}
Theorem \ref{main} does not hold for general $W$ with $\vol(W)>0$. To be precise, taking a resolution of the base ideal of $W_k$, the right hand side in Theorem \ref{main} is given by $\lim_{k\to\infty} k^{-n}M_k^n$ just as (\ref{M_k}). As it was explained in Remark \ref{birational1}, however, Proposition \ref{algebraic part} is not true in general so that the equilibrium mass does not give $\vol(W)$. 
\end{rem} 

\section{Estimate of the Bergman kernels}

In this section we give a lower bound estimate of the Bergman kernels to prove  Theorem \ref{continuity}. 
Let us fix the notation as in Section $4$. For simplicity, we denote $P_W\varphi$ by $P\varphi$. 
The following is the required  one. 
\begin{prop}\label{key}
Fix a smooth volume form $dV$ on $X$. Set $\norm{\sigma}^2_{k\varphi}:= \int_X \abs{\sigma}^2e^{-k\varphi} dV$ and 
\begin{equation*}
u_k(x) := \sup \bigg\{ \frac{1}{k} \log\abs{\sigma(x)}^2  \ \bigg| \ \sigma \in W_k, \ \text{and}  \ \norm{\sigma}^2_{k\varphi} \leq 1. \bigg\}. 
\end{equation*}
for every $x \in X$. 
Assume that $W$ is finitely generated, birational, and further $\Proj W$, which is the image of the associated birational map $f_k\colon X \dashrightarrow \mathbb{P}W_{k}^*$ for any sufficiently divisible $k$, is normal. 
Then there exist a proper algebraic subset $S$ such that a subsequence of $u_k$ converges to $P\varphi$ uniformly on any compact set of $X \setminus S$. In particular, $P\varphi$ is continuous on a non-empty Zariski open set of $X$. 
\end{prop}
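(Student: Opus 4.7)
The plan is to reduce the statement, via the birational map $f_\ell\colon X \dashrightarrow Y := \Proj W$, to the known regularity of the equilibrium metric for the ample line bundle $\mathcal{O}_Y(1)$ on the normal projective variety $Y$, which is essentially the content of \cite{Ber09}. First I would fix $\ell$ sufficiently divisible so that $W$ is generated in degree $\ell$, the map $f_\ell$ is birational onto $Y$, and $W_{m\ell}=H^0(Y,\mathcal{O}_Y(m))$ for every large $m$ (via Serre vanishing together with the normality of $Y$). Let $\mu\colon X' \to X$ be a log-resolution of $\mathfrak{b}(W_\ell)$ with $\mathfrak{b}(W_\ell)\cdot\mathcal{O}_{X'}=\mathcal{O}_{X'}(-F)$, so that $g:=f_\ell\circ\mu\colon X'\to Y$ is a morphism and $\mu^*L^\ell = g^*\mathcal{O}_Y(1)\otimes \mathcal{O}_{X'}(F)$. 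Under this identification $\mu^*W_{m\ell}=\{\, g^*s\cdot e_F^m \,:\, s\in H^0(Y,\mathcal{O}_Y(m))\,\}$, where $e_F$ denotes the canonical section of $\mathcal{O}_{X'}(F)$.

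Next I would transport the $L^2$ structure. Fixing a smooth reference weight $\tilde{\psi}_Y$ on $\mathcal{O}_Y(1)$ and a smooth Hermitian metric $\chi_F$ on $\mathcal{O}_{X'}(F)$, the identification of line bundles yields a smooth bounded function $\eta$ on $X'$ with $\ell\mu^*\varphi = g^*\tilde{\psi}_Y + \chi_F + \eta$, so that for $\sigma = g^*s \cdot e_F^m$ one has
\[
\norm{\sigma}^2_{m\ell\varphi} \;=\; \int_{X'} |g^*s|^2 e^{-m g^*\tilde{\psi}_Y} \bigl(|e_F|^2 e^{-\chi_F}\bigr)^{m} e^{-m\eta}\, \mu^*dV.
\]
On any compact subset $K\subset X$ disjoint from the indeterminacy locus of $f_\ell$ and from $\Supp\mu_*F$, the factors $|e_F|^2 e^{-\chi_F}$ and $e^{-\eta}$ are bounded between two positive constants on $\mu^{-1}(K)$, so $\norm{\sigma}^2_{m\ell\varphi}$ is sandwiched, up to multiplicative constants of the form $C_K^m$, between integrals of $|s|^2 e^{-m\tilde{\psi}_Y}$ over $f_\ell(K)$ against a smooth positive volume form $dV_Y$ on the Zariski open complement of $g(F)$ in $Y$. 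Consequently $u_{m\ell}(x) = v_m(f_\ell(x)) + O((\log m)/m)$ uniformly on $K$, modulo an additive bounded term absorbed into the equilibrium weight, where $v_m$ is the Bergman kernel function on $Y$ for $H^0(Y,\mathcal{O}_Y(m))$ with respect to $\tilde{\psi}_Y$ and $dV_Y$.

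The final step is to invoke \cite{Ber09} on $Y$: passing to a desingularization $\nu\colon Y'\to Y$, the line bundle $\nu^*\mathcal{O}_Y(1)$ is big and nef on the smooth projective variety $Y'$, and Berman's theorem yields a non-empty Zariski open $Y^\circ\subset Y$ and a subsequence along which $v_m$ converges uniformly on compact subsets of $Y^\circ$ to the equilibrium weight of $\tilde{\psi}_Y$. Pulling back through $f_\ell$ gives uniform convergence of the corresponding subsequence of $u_{m\ell}$ on compact subsets of $X\setminus S$, where $S$ is the union of the indeterminacy locus of $f_\ell$, the support of $\mu_*F$, and the $f_\ell$-preimage of $Y\setminus Y^\circ$. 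Since the uniform limit is continuous and $(\limsup_k u_k)^* = P_W\varphi$ was already observed in Section 4, this limit coincides with $P_W\varphi$ on $X\setminus S$, proving both assertions.

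The principal obstacle is executing the transfer rigorously: the induced reference data $(dV_Y,\tilde{\psi}_Y)$ are only smooth and strictly positive on the complement of $g(F)$ in $Y$, whereas \cite{Ber09} is stated for fully smooth data on a smooth projective variety. One must verify that Berman's uniform Bergman kernel asymptotic survives the passage to the normal $Y$, either by lifting the analysis to the desingularization $Y'$ and tracking the discrepancy divisor, or by noting that the asymptotic depends only on the reference data restricted to a Zariski open neighborhood of any compact subset of $Y^\circ$. The normality of $\Proj W$ enters here both to pin down the identification $W_{m\ell}=H^0(Y,\mathcal{O}_Y(m))$ in large degrees and to ensure that $Y$ is regular enough for the desingularization-based analytic machinery to descend cleanly.
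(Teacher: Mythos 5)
Your high-level plan is in the same spirit as the paper's proof: both reduce to the normal projective variety $Y=\Proj W$, identify $W_{m\ell}\simeq H^0(Y,\mathcal{O}_Y(m))$ via the normality hypothesis, and pass to a resolution $\tilde Y\to Y$ where the line bundle becomes big and nef on a smooth model so that analytic machinery applies. However the method is genuinely different. The paper does not invoke \cite{Ber09} as a black box; it reruns the peak-section construction on $\tilde Y$ (Ohsawa--Takegoshi extension plus H\"ormander's $\bar\partial$-estimate) \emph{with respect to the pushed-forward equilibrium weight} $(k-1)\ell\,\tilde f_* P\varphi+\psi$, where $\psi$ is a fixed strictly positively curved algebraic singular weight with $\tilde f^*\psi\leq\ell\varphi$, and then pulls the resulting section back by $\tilde f^*$. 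The two-sided bound $P\varphi-C_K/k\leq u_k\leq P\varphi+C_K/k$ on $K$ then follows directly, with no need to compare equilibrium envelopes on $X$ and on $Y$.

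The gap in your argument is the sandwich estimate in the second paragraph. You claim that $\norm{\sigma}^2_{m\ell\varphi}$ is pinched, up to factors $C_K^m$, between integrals of $\abs{s}^2 e^{-m\tilde\psi_Y}$ over $f_\ell(K)$. The lower bound holds: restricting the integral to $\mu^{-1}(K)$ and using $(\abs{e_F}^2e^{-\chi_F})^m e^{-m\eta}\geq c_K^{-m}$ there gives $\norm{\sigma}^2_{m\ell\varphi}\geq c_K^{-m}\int_{f_\ell(K)}\abs{s}^2e^{-m\tilde\psi_Y}\,dV_Y$. The upper bound fails: $\norm{\sigma}^2_{m\ell\varphi}$ is a global integral over $X$, and the contribution from outside $K$ --- in particular near $\Supp F$, where $\abs{e_F}^2e^{-\chi_F}$ vanishes to infinite order as $m\to\infty$ --- is not dominated by the integral over $f_\ell(K)$. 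What survives is only the one-sided comparison: from $s$ with unit $L^2$ norm on $Y$ one produces $\sigma=f_\ell^*s$ with $\norm{\sigma}^2_{m\ell\varphi}\leq A^m C$, giving $u_{m\ell}(x)\geq\frac1\ell v_m(f_\ell(x))+h(x)-O(1/m)$ with $h(x)=\frac1\ell\bigl(\log\abs{e_F}^2(x')-\log A\bigr)$ a fixed bounded function, \emph{not} an $O((\log m)/m)$ term, and no matching upper bound for $u_{m\ell}$ in terms of $v_m$. Combined with the trivial $u_{m\ell}\leq P_W\varphi+O(1/m)$ and Berman's convergence $v_m\to P_{\tilde\psi_Y}$, this yields only $\frac1\ell P_{\tilde\psi_Y}\circ f_\ell+h\leq P_W\varphi$ on $K$. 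To obtain uniform convergence of $u_{m\ell}$ to $P_W\varphi$ you would need the reverse inequality, i.e.\ the identity between $P_W\varphi$ and the pulled-back equilibrium weight on $Y$ up to the explicit shift $h$. That identity is precisely the substantive content of the proposition, and your transport argument does not deliver it; neither the phrase ``absorbed into the equilibrium weight'' nor the appeal to $(\limsup_k u_k)^*=P_W\varphi$ supplies the missing upper bound for $u_{m\ell}$ in terms of the $Y$-side data. This is exactly the obstruction the paper's construction circumvents by extending and $\bar\partial$-correcting against $\tilde f_*P\varphi$ itself rather than against a reference metric on $Y$.
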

\begin{proof}
This has been already known in the complete case by \cite{Ber09} and can be extended to the present case by pushing the $L^2$-estimate to the image of the map $f_k\colon X \dashrightarrow \mathbb{P}W_k^*$. We claim for any compact set $K\Subset X \setminus S$ there exist a constant $C_K$ and a positive integer $\ell$ such that 
\begin{equation}\label{uniform}
P\varphi -\frac{C_K}{k} \leq u_k \leq P\varphi +\frac{C_K}{k}
\end{equation}
holds for any $k$ divisible by $\ell$. The right hand side inequality is obvious. The proof of the left hand side is essential and it needs some $L^2$-estimate for the solution of a $\bar{\partial}$-equation. 

Denote the image of the map $f_k\colon X \dashrightarrow \mathbb{P}W_k^*$ by $Y_k$. 
By the finite generation, the canonical map $S^kW_{\ell} \to W_{k\ell}$ is surjective for any $k$ and sufficiently large $\ell$. 
The restriction of $\mathbb{P}W_{k\ell}^*\hookrightarrow \mathbb{P}S^kW_{\ell}^*$ to $Y_{k\ell}$ and the restriction of Segre embedding $\mathbb{P}W_{\ell}^* \to \mathbb{P}S^kW_{\ell}^*$ to $Y_{\ell}$ are isomorphic. 
Thus we have $Y_{k\ell} \simeq Y_{\ell}$ for every $k\geq 1$. 
Let us fix $\ell$ and denote $Y_{\ell}$ by $Y$. Denote by $\mathcal{O}_Y(1)$ the restriction of $\mathcal{O}_{\mathbb{P}W_{\ell}^*}(1)$ to $Y$. 
Note that by the Kodaira vanishing theorem there exists a number $k_0$ such that $S^kW_{\ell}=H^0(\mathbb{P}W_{\ell}^*, \mathcal{O}(k)) \to H^0(Y, \mathcal{O}_Y(k))$ is surjective for any $k\geq k_0$. 
We take a modification $\mu\colon X_{\ell} \to X$ such that $X_{\ell}$ is smooth and there exist a decomposition $\mu^*\abs{W_{\ell}}=\abs{V}+E$ with a free linear system $V$ and an effective divisor $E$. Then the map $h\colon X_{\ell} \to Y$ defined by $V$ equals to $\mu\circ f$. 
Fix a standard section of $E$ and denote it by $s_E$. Given a section $s\in H^0(Y, \mathcal{O}_Y(k))$, one can push forward $h^*s\otimes s_E^{k}$ by $\mu$, thanks to the normality of $X$. We denote it by $f^*s$. This defines a map $f^*\colon H^0(Y, \mathcal{O}_Y(k)) \to W_{k\ell}$. 
Moreover, since $Y$ is normal by the assumption, given any $\sigma \in W_{k\ell}$ one can push-forward $\mu^*\sigma \otimes s_E^{-k}$ by the birational map $h$. We denote it by $h_*\sigma$. Thus we obtain an isomorphism $W_{k\ell}\simeq H^0(Y, \mathcal{O}_Y(k))$ for any $k$. 

Taking a resolution $\pi\colon \tilde{Y} \to Y$ we have a nef and big line bundle $\pi^*\mathcal{O}_Y(1)$ and an induced birational map $\tilde{f}\colon X \dashrightarrow \tilde{Y}$. The normality of $Y$ deduces $W_{k\ell}\simeq H^0(Y, \mathcal{O}_Y(k)) \simeq H^0(\tilde{Y}, \pi^*\mathcal{O}_Y(k))$. Here one can also pull-back or push-forward sections by the birational map $\tilde{f}$, taking a modification of the source space as above. 
Further, we can pull-back psh weights. That is, given a psh weight $\psi$ of a singular metric on $\pi^*\mathcal{O}_Y(1)$, we may push-forward $\tilde{h}^*\psi+\log \abs{s_E}^2$ by $\mu$ thanks to the normality of $X$. It defines a psh weight of a metric on $L^{\otimes \ell}$ and we denote it by $\tilde{f}^*\psi$. Finally, given a smooth weight $\varphi$ the push-forward $\tilde{f}_*P\varphi$ can be defined as follows: 
\begin{align*}
(\tilde{f}_*P\varphi)(y) := \sup{^*} \bigg\{ \frac{1}{k\ell} \log\abs{(\tilde{f}_*\sigma)(y)}^2  \ \bigg| \ k\geq 1, \ \sigma \in W_{k\ell}, \ \text{and}  \ \abs{\sigma}^2e^{-k\ell\varphi} \leq 1 \ \text{on} \ X \bigg\}. 
\end{align*}
This actually defines a psh weight of a singular metric on the $\mathbb{Q}$-line bundle $\ell^{-1}\pi^*\mathcal{O}_Y(1)$ and $\ell\tilde{f}_*P\varphi$ is a psh weight for the genuine line bundle $\pi^*\mathcal{O}_Y(1)$.

Fix a K\"{a}hler form $\omega$ on $\tilde{Y}$ and a psh weight $\psi$ of a metric on $\pi^*\mathcal{O}_Y(1)$, such that $\psi$ has algebraic singularity, $\tilde{f}^*\psi \leq \ell\varphi$, and $dd^c\psi\geq\omega$ hold. 
Let $S$ be a proper algebraic subset such that $\tilde{f}|_{X\setminus S}$ is isomorphic and $\psi$ is smooth outside $\tilde{f}(S) $. 

We claim that there exist sufficiently large $\ell$, $C$ and a section
$\sigma_k \in W_{k\ell}$ for each $k \geq 1$ such that
\begin{itemize}
 \setlength{\itemsep}{0pt}
  \item[$(1)$]
    $ \abs{\sigma_k(x)}^2e^{-k\ell P\varphi} \geq 
      C^{-1}  \ \ for \ any \ x \in K$, and   
  \item[$(2)$]
    $ \norm{\sigma_k}^2_{k\ell \varphi} \leq C $. 
\end{itemize} 
In fact, this implies
\begin{equation*}
 e^{k\ell u_{k\ell}}
      \geq \frac{\abs{\sigma_k(x)}^2}{\norm{\sigma_k}^2_{k\ell \varphi}} 
      \geq C^{-2}e^{k\ell P\varphi}. 
\end{equation*}
It then yields the inequality $P\varphi-\frac{C_K}{k\ell}\leq u_{k\ell}$ which is nothing but the left-hand side of (\ref{uniform}). 

Let us fix $x \in K$ and set $y:=\tilde{f}(x)$. Then applying the Ohsawa--Takegoshi $L^2$-extension theorem, for any $a\in\mathbb{C}$
one can get a holomorphic function $g$ on a small ball $B(y; r)$ such that $g(y)=a$ and  
\begin{equation*}
\int_{B(y; r)} \abs{g}^2 e^{-(k-1)\ell \tilde{f}_* P\varphi - \psi}dV_{\omega}
\leq C\abs{a}^2 e^{-(k-1)\ell \tilde{f}_* P\varphi - \psi} 
\end{equation*}
hold. Let $\rho$ be a cut-off function supported on $B(y; r)$. 
Solving the equation $\bar{\partial}v=\bar{\partial}(\rho g)$ by H\"{o}rmandar's $L^2$-method, we get a solution $v$ with 
\begin{equation*}
\int_{\tilde{Y}} \abs{v}^2 e^{-(k-1)\ell \tilde{f}_* P\varphi - \psi  - \rho (n+2)\log \abs{z}^2 }dV_{\omega}
\leq C\abs{a}^2 e^{-(k-1)\ell \tilde{f}_* P\varphi -  \psi}. 
\end{equation*}
Here we take a local coordinate $z=(z_1, \dots z_n)$ around $y$ and $\ell$ sufficiently large to ensure the positivity of the curvature of the total weight. Set $s:=\rho g -v \in H^0(\tilde{Y}, \pi^*\mathcal{O}_Y(k))$. Then it yields $s(y)=a$ and 
\begin{equation*}
\int_{\tilde{Y}} \abs{s}^2 e^{-(k-1)\ell \tilde{f}_* P\varphi - \psi}dV_{\omega}
\leq C\abs{a}^2 e^{-(k-1)\ell \tilde{f}_* P\varphi - \psi}. 
\end{equation*}
We may choose suitable $a$ so that the right hand side equals to $C$.  
Finally we set $\sigma_k:=\tilde{f}^*s$. 
Then it yields: 
\begin{itemize}
 \setlength{\itemsep}{0pt}
  \item[$(1)$]
    $\abs{\sigma_{k}(x)}^2e^{-(k-1)\ell P\varphi - \tilde{f}^*\psi} = 1$, 
  \item[$(2)$]
    $\norm{\sigma_k}^2_{(k-1)\ell P\varphi + \tilde{f}^*\psi} \leq C$. 
\end{itemize}
These inequalities respectively correspond to $(1)$ and $(2)$ of the claim. We indeed infer 
\begin{equation*}
 \norm{\sigma_k}^2_{k\ell \varphi} \leq \norm{s} ^2_{{(k-1)\ell P\varphi + \tilde{f}^*\psi}} \leq C. 
\end{equation*}
On the other hand we may assume $e^{(\ell P\varphi- \tilde{f}^*\psi)(x)} \leq C$ by the smoothness of $\psi$ around $y$ so that
\begin{equation*}
 1 = \abs{\sigma_{k}(x)}^2e^{-(k-1)\ell P\varphi -  \tilde{f}^*\psi} 
  \leq C\abs{\sigma_{k}(x)}^2e^{-k\ell P\varphi}
\end{equation*}
holds. Here $C$ depends on only $\ell$ and $K$. 
Therefore the claim has been shown to conclude the theorem. 
\end{proof}

 {\bf Acknowledgments.}
The author would like to express his gratitude 
to his advisor Professor Shigeharu Takayama for his warm encouragements, 
suggestions and reading the drafts. 
The author is grateful to Atsushi Ito for several helpful comments on the algebraic reduction arguments in this paper. 
The author greatly indebted to the referee for his many kind advices on many mathematical or linguistic points of improvement. In particular he indicated to the author a rather straightforward proof of Theorem \ref{main} after his reading the first version of this paper. In the first version the author used the uniform convergence in Proposition \ref{key} to derive Theorem \ref{main} but such a Bergman kernel estimate turned out to be unnecessary if one notes that $\varphi_k$ is essentially non-decreasing. 
This research is supported by JSPS Research Fellowships for Young Scientists (22-6742). 
 

\end{document}